\theoremstyle{plain}
\newtheorem{thm}{Theorem}[section]
\newtheorem{lem}[thm]{Lemma}
\newtheorem{cor}[thm]{Corollary}
\newtheorem*{thm*}{Theorem}
\newtheorem*{cor*}{Corollary}
\newtheorem*{prop*}{Proposition}
\newtheorem*{claim*}{Claim}
\theoremstyle{definition}
\newtheorem{defn}[thm]{Definition}
\newtheorem*{conj*}{Conjecture}
\theoremstyle{remark}
\newtheorem*{mpf}{{\it Proof of Theorem \ref{main}}}
\numberwithin{equation}{thm}
\def\Hom{\operatorname{Hom}}
\def\add{\operatorname{add}}
\def\Ext{\operatorname{Ext}}
\def\Tor{\operatorname{Tor}}
\def\Ker{\operatorname{Ker}}
\def\Im{\operatorname{Im}}
\def\tr{\operatorname{Tr}}
\def\m{\mathfrak m}
\def\p{\mathfrak p}
\def\cdim{\operatorname{\text{$C$}-dim}}
\def\depth{\operatorname{depth}}
\def\grade{\operatorname{grade}}
\begin{document}

\title[Embeddings into modules of finite homological dimensions]{On the existence of embeddings into modules\\
of finite homological dimensions}
\author{Ryo Takahashi}
\address{Department of Mathematical Sciences, Faculty of Science, Shinshu University, 3-1-1 Asahi, Matsumoto, Nagano 390-8621, Japan}
\email{takahasi@math.shinshu-u.ac.jp}
\author{Siamak Yassemi}
\address{Department of Mathematics, University of Tehran, P. O. Box 13145-448, Tehran, Iran -- and -- School of Mathematics, Institute for
Research in Fundamental Sciences (IPM), P. O. Box 19395-5746, Tehran, Iran}
\email{yassemi@ipm.ir}
\author{Yuji Yoshino}
\address{Graduate School of Natural Science and Technology, Okayama University, Okayama 700-8530, Japan}
\email{yoshino@math.okayama-u.ac.jp}
\keywords{Gorenstein ring, Cohen-Macaulay ring, projective dimension, injective dimension, (semi)dualizing module}
\subjclass[2000]{13D05, 13H10}
\thanks{The first and second authors were supported in part by Grant-in-Aid for Young Scientists (B) 19740008 from JSPS and by grant No. 88013211 from IPM, respectively}
\begin{abstract}
Let $R$ be a commutative Noetherian local ring. We show that $R$ is Gorenstein if and only if every finitely generated $R$-module can be embedded in a finitely generated $R$-module of finite projective dimension. This extends a result of Auslander and Bridger to rings of higher Krull dimension, and it also improves a result due to Foxby where the ring is assumed to be Cohen-Macaulay.
\end{abstract}
\maketitle

\section{Introduction}

Throughout this paper, let $R$ be a commutative Noetherian local ring.
All $R$-modules in this paper are assumed to be finitely generated.

In \cite[Proposition 2.6 (a) and (d)]{ABr} Auslander and Bridger proved the following.

\begin{thm}[Auslander-Bridger]\label{ausbr}
The following are equivalent:
\begin{enumerate}[\rm (1)]
\item
$R$ is quasi-Frobenius (i.e. Gorenstein with Krull dimension zero).
\item
Every $R$-module can be embedded in a free $R$-module.
\end{enumerate}
\end{thm}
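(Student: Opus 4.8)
The plan is to replace the geometric condition (2) by the homological condition that $R$ be self-injective, and then invoke the classical fact that a Noetherian local ring is quasi-Frobenius exactly when it is injective as a module over itself (equivalently $\id_R R=0$, which forces $R$ to be Gorenstein with $\dim R=\depth R=\id_R R=0$). In this way the whole theorem reduces to proving that (2) holds if and only if $R$ is an injective $R$-module. The device that converts the assertion ``$M$ embeds in a free module'' into an $\Ext$-vanishing statement is the Auslander--Bridger transpose $\tr$, which is natural here since the theorem is itself due to Auslander and Bridger.

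For the implication (2) $\Rightarrow$ (1), I would use, for each finitely generated $M$, the exact sequence $0\to\Ext^1_R(\tr M,R)\to M\to M^{\ast\ast}\to\Ext^2_R(\tr M,R)\to 0$, where $M^{\ast\ast}$ is the bidual with respect to $\Hom(-,R)$; thus $M$ is torsionless, i.e.\ embeds into a free module, precisely when $\Ext^1_R(\tr M,R)=0$. I would then apply (2) to the module $\tr(R/I)$ for an arbitrary ideal $I$: it embeds into a free module, hence $\Ext^1_R(\tr\tr(R/I),R)=0$. Since $\tr$ is a duality on the stable category $\lmod$ and $\Ext^1_R(-,R)$ ignores free summands, this group equals $\Ext^1_R(R/I,R)$. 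Consequently $\Ext^1_R(R/I,R)=0$ for every ideal $I$, and Baer's criterion yields that $R$ is injective, which by the reduction above makes $R$ quasi-Frobenius.

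Conversely, for (1) $\Rightarrow$ (2) I would argue directly from self-injectivity: if $R$ is quasi-Frobenius it is injective, so $\Ext^{\geq 1}_R(-,R)$ vanishes identically, whence $\Ext^1_R(\tr M,R)=0$ and the same exact sequence shows every finitely generated $M$ is torsionless. Alternatively one embeds $M$ into its injective hull $E(M)$, which is finitely generated because $R$ is Artinian and is free because over a quasi-Frobenius ring finitely generated injective modules are projective, hence free. As a first sanity check one also sees the easy necessary condition contained in (2): embedding the residue field $k$ into some $R^{n}$ produces a nonzero element of $\Hom_R(k,R)\cong\Soc R$, so $\m\in\Ass R$ and $\depth R=0$.

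I expect the only genuinely delicate point to be the transpose-and-Baer step in (2) $\Rightarrow$ (1): one must handle the stable identification $\tr\tr(R/I)\cong R/I$ and the fact that $\Ext^1_R(-,R)$ is well defined on $\lmod$ with care, and verify that Baer's criterion is being applied to $\Ext^1_R(R/I,R)$ itself and not merely to the transpose. Once self-injectivity is established, the passage to quasi-Frobenius (in particular $\dim R=0$) and the entire reverse implication are routine.
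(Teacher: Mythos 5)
Your proof is correct, but it is worth noting that the paper itself does not prove this theorem at all: it is quoted from Auslander--Bridger \cite{ABr} (Proposition 2.6(a) and (d)), so the natural comparison is with the proof the paper gives of its Main Theorem, which generalizes this statement. Your central tool is the same one that both \cite{ABr} and the paper's Main Theorem rely on, namely the exact sequence
$$
0 \to \Ext_R^1(\tr M,R) \to M \to \Hom_R(\Hom_R(M,R),R) \to \Ext_R^2(\tr M,R) \to 0,
$$
which identifies ``$M$ embeds in a free module'' with the vanishing $\Ext_R^1(\tr M,R)=0$; in the paper this appears in $C$-dual form as the identification $\Ker\lambda_{M\otimes_RC}\cong\Ext_R^1(\tr M,C)$. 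Where you genuinely diverge is in how the $\Ext$-vanishing is converted into injectivity: you quantify over all ideals, applying hypothesis (2) to every module $\tr(R/I)$, using the stable identifications $\tr\tr(R/I)\simeq R/I$ in $\lmod$ and the invariance of $\Ext_R^1(-,R)$ under free summands (both standard, and you flag them with the right level of care), and then concluding self-injectivity of $R$ by Baer's criterion; your reduction of quasi-Frobenius to $\id_RR=0$ is likewise classical and correct, as are both of your arguments for (1) $\Rightarrow$ (2). The paper's Main Theorem instead tests the embedding hypothesis against a \emph{single} module, $\tr\Omega^tk\otimes_RC$, deduces the single vanishing $\Ext_R^{t+1}(k,C)=0$, and then invokes the much deeper theorem of Fossum--Foxby--Griffith--Reiten \cite{FFGR} to conclude $\id_RC<\infty$. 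So your route buys elementarity --- Baer's criterion in place of \cite{FFGR} --- at the cost of using the embedding hypothesis for an infinite family of modules, whereas the paper's route shows the hypothesis need only be checked on one explicit test module, which is precisely the extra content of condition (3) of its Main Theorem.
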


On the other hand, in \cite[Theorem 2]{F} Foxby showed the following.

\begin{thm}[Foxby]\label{foxby}
The following are equivalent:
\begin{enumerate}[\rm (1)]
\item
$R$ is Gorenstein.
\item
$R$ is Cohen-Macaulay, and every $R$-module can be embedded in an $R$-module of finite projective dimension.
\end{enumerate}
\end{thm}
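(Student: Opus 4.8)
The plan is to prove both implications through the canonical module, using throughout the elementary dictionary that over a Gorenstein ring a finitely generated module has finite projective dimension if and only if it has finite injective dimension. The one technical preliminary is to arrange a canonical module $\omega$: if $R$ already has one I proceed directly, and otherwise I reduce to this case by a suitable faithfully flat local base change preserving the Cohen--Macaulay and Gorenstein properties, checking that the finitely many instances of the embedding hypothesis actually needed below descend along the extension. This descent is delicate precisely because the hypothesis is phrased for finitely generated $R$-modules while $\omega$ naturally lives over the extension, so I would keep the base change as light as possible.

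For (1)$\Rightarrow$(2): a Gorenstein ring is Cohen--Macaulay, so the first assertion of (2) is automatic, and for the embedding I would invoke the Auslander--Buchweitz \emph{finite injective dimension hull}: over a Cohen--Macaulay ring with canonical module, every module $M$ fits in a short exact sequence $0\to M\to H\to X\to 0$ with $\id_R H<\infty$ and $X$ maximal Cohen--Macaulay. Since $R$ is Gorenstein, $\id_R H<\infty$ forces $\pd_R H<\infty$, so $M\hookrightarrow H$ is the desired embedding. Alternatively one checks directly that a maximal Cohen--Macaulay module $N$ embeds into $\omega^{\,n}$ through the biduality $N\cong N^{\dagger\dagger}$ for $\dagger=\Hom_R(-,\omega)$, which over a Gorenstein ring ($\omega\cong R$) is an embedding into a free module; the general case then follows by first taking a maximal Cohen--Macaulay approximation.

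For (2)$\Rightarrow$(1), the substantial direction, it suffices to prove $\pd_R\omega<\infty$: as $\omega$ is maximal Cohen--Macaulay, Auslander--Buchsbaum then gives $\pd_R\omega=\depth R-\depth\omega=0$, so $\omega\cong R$ and $R$ is Gorenstein. Applying (2) to $\omega$ produces $0\to\omega\to F\to C\to 0$ with $\pd_R F<\infty$. The structural input I would use is that every module of finite projective dimension lies in the Auslander class $\A(\omega)$, which is closed under the two-out-of-three property in short exact sequences; since $\omega\in\A(\omega)$ is itself equivalent to $R$ being Gorenstein, and $\omega\in\A(\omega)$ holds as soon as $C\in\A(\omega)$ (because $F\in\A(\omega)$), the problem is reduced to placing the cokernel $C$ in $\A(\omega)$. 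Iterating the hypothesis on successive cokernels yields a coresolution $0\to\omega\to F^0\to F^1\to\cdots$ by modules of finite projective dimension, and the goal becomes to show that some cosyzygy lies in $\A(\omega)$.

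The hard part will be exactly this last step, and I expect it to be the main obstacle: merely building the coresolution does not force it to terminate, so the full strength of ``\emph{every} module embeds'' must be used essentially rather than formally. I would attack it by bringing in the finiteness of $\id_R\omega$: using $\Ext^{>0}_R(\omega,\omega)=0$ together with local duality to express $\Ext^i_R(C,\omega)$ through the local cohomology of $F$, one gets $\Ext^{\gg 0}_R(C,\omega)=0$, and the real work is to upgrade this vanishing, via $\id_R\omega<\infty$ and a depth and Bass-number count along the cosyzygies, into the statement that a sufficiently high cosyzygy has finite projective dimension; dimension shifting then returns $\pd_R\omega<\infty$. This duality-and-bookkeeping argument is where the Cohen--Macaulay hypothesis (supplying $\omega$ and local duality) is spent, and it is precisely this that the present paper must circumvent. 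The Auslander--Bridger result (Theorem \ref{ausbr}) is the $\dim R=0$ shadow of the whole argument and anchors the base case: there the $\omega$-duality degenerates and ``embeds in a module of finite projective dimension'' reads literally as ``embeds in a free module''.
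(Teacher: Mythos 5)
Your direction (1)$\Rightarrow$(2) is fine: the Auslander--Buchweitz hull of finite injective dimension does exactly this job (it is the content of Foxby's Theorem 1, which is what the paper itself cites), and over a Gorenstein ring finite injective dimension can be traded for finite projective dimension. (Only your throwaway ``alternative'' has a slip: to treat a general $M$ you need the hull $0\to M\to H\to X\to 0$ again, not the maximal Cohen--Macaulay approximation $0\to Y\to X\to M\to 0$, which maps \emph{onto} $M$ rather than embedding it.) The genuine gap is in (2)$\Rightarrow$(1), exactly at the step you flagged as the main obstacle, and the tool you propose for it is vacuous: since $\id_R\omega=\dim R<\infty$, one has $\Ext_R^i(N,\omega)=0$ for all $i>\dim R$ and for \emph{every} finitely generated module $N$, over every Cohen--Macaulay local ring with canonical module, Gorenstein or not. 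So the vanishing $\Ext_R^{\gg 0}(C,\omega)=0$ extracted from your coresolution carries no information about $C$ whatsoever, and no amount of depth or Bass-number bookkeeping can upgrade it to finite projective dimension of a cosyzygy. Your reduction to ``some cosyzygy lies in $\A(\omega)$'' is correct but essentially tautological ($\omega\in\A(\omega)$ iff $\Gdim_R\omega<\infty$ iff $R$ is Gorenstein), so at that point the entire theorem is still ahead of you. There is also a second unresolved gap: a Cohen--Macaulay local ring need not possess a canonical module, and your base-change fix does not work as stated, because hypothesis (2) quantifies only over finitely generated $R$-modules, while $\omega_{\widehat R}$ need not be extended from any $R$-module; thus the one instance of the embedding hypothesis your argument requires may simply be unavailable over $\widehat R$.

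For contrast, the paper reaches this statement (in the stronger form of Corollary \ref{ques}, with no Cohen--Macaulay hypothesis at all) by testing the embedding hypothesis not on $\omega$ but on the single module $M=\tr\Omega^tk$ with $t=\depth R$, chosen so that an embedding yields a \emph{low-degree} vanishing rather than an asymptotic one. Indeed $\Ext_R^i(M,R)=0$ for $1\le i\le t$ (this is $t$-torsionfreeness of $\Omega^tk$, via \cite{ABr}); an embedding $M\hookrightarrow X$ with $\pd_RX<\infty$ automatically has $\pd_RX\le t$, so Lemma \ref{syz} forces the biduality map $\lambda_M\colon M\to\Hom_R(\Hom_R(M,R),R)$ to be injective; and by Auslander--Bridger, $\Ker\lambda_M\cong\Ext_R^1(\tr M,R)\cong\Ext_R^{t+1}(k,R)$. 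Hence $\Ext_R^{t+1}(k,R)=0$, and the Fossum--Foxby--Griffith--Reiten theorem \cite{FFGR} gives $\id_RR<\infty$. The moral difference from your plan: the invariant that an embedding into a finite-projective-dimension module can kill is the single Bass-type number $\Ext_R^{\depth R+1}(k,R)$, not the high Ext modules against $\omega$, which vanish for free. Replacing your ``hard step'' by an argument of this kind is what would be needed to complete the proposal.
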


For an $R$-module $C$ we denote by $\add_RC$ the class of
$R$-modules which are direct summands of finite direct sums of
copies of $C$. The {\em $C$-dimension} of an $R$-module $X$, $\cdim_RX$,
is defined as the infimum of nonnegative integers $n$ such that
there exists an exact sequence
$$
0 \to C_n \to C_{n-1} \to \cdots \to C_0 \to X \to 0
$$
of $R$-modules with $C_i\in\add_RC$ for all $0\le i\le n$.

In this paper, we prove the following theorem. This result removes from Theorem \ref{foxby} the assumption that $R$ is Cohen-Macaulay, and it extends Theorem \ref{ausbr} to rings of higher Krull dimension. It should be noted
that our proof of this result is different from Foxby's proof for
the special case $C=R$.

\begin{thm}\label{main}
Let $R$ be a commutative Noetherian local ring with residue field $k$.
Let $C$ be a semidualizing $R$-module of depth $t$.
Then the following are equivalent:
\begin{enumerate}[\rm (1)]
\item
$C$ is dualizing.
\item
Every $R$-module can be embedded in an $R$-module of finite $C$-dimension.
\item
The $R$-module $\tr\Omega^tk\otimes_RC$ can be embedded in an $R$-module of finite $C$-dimension.
(Here $\tr\Omega^tk$ denotes the transpose of the $t$-th syzygy of the $R$-module $k$.)
\end{enumerate}
Moreover, if one of these three conditions holds, then $R$ is Cohen-Macaulay.
\end{thm}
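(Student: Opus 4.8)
The plan is to prove the cycle $(1)\Rightarrow(2)\Rightarrow(3)\Rightarrow(1)$, noting at the outset two standing reductions. First, for a semidualizing module one has $\depth_R C=\depth R$, so $t=\depth R$ throughout. Second, the implication $(2)\Rightarrow(3)$ is immediate, since (3) merely asserts (2) for the single module $\tr\Omega^t k\otimes_R C$. The closing assertion that $R$ is Cohen-Macaulay will then follow from (1), because the existence of a semidualizing module of finite injective dimension forces $R$ to be Cohen-Macaulay.

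For $(1)\Rightarrow(2)$, assume $C$ is dualizing, so $R$ is Cohen-Macaulay of dimension $t$ and $C$ is its canonical module. The first step is to identify the modules of finite $C$-dimension via Foxby equivalence: a module $Y$ satisfies $\cdim_R Y<\infty$ if and only if $Y$ lies in the Bass class $\mathcal{B}_C$ and $\Hom_R(C,Y)$ has finite projective dimension, equivalently $Y\cong C\otimes_R W$ with $\pd_R W<\infty$. It therefore suffices to embed an arbitrary $X$ into such a $C\otimes_R W$. I would first treat $X$ of finite length: choosing a system of parameters $\xx$, each generator of $X$ is annihilated by a power of each $x_i$, so $X$ is a module over the Artinian ring $R/(\xx^{[m]})$ for $m\gg0$; since $C/\xx^{[m]}C\cong C\otimes_R R/(\xx^{[m]})$ is the injective hull of $k$ over $R/(\xx^{[m]})$, it is an injective cogenerator there, and $X$ embeds into a finite direct sum of copies of it, a module of $C$-dimension $t$. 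The general case follows by a (routine) dévissage on $\dim X$: the kernel of the biduality map $X\to\Hom_R(\Hom_R(X,C),C)$ has strictly smaller dimension and is handled by induction, while the image embeds into a power of $C$; splicing these produces the desired embedding.

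The implication $(3)\Rightarrow(1)$ is the heart of the matter and the main obstacle. Here I would exploit the specific shape of $M:=\tr\Omega^t k\otimes_R C$. Writing the minimal free resolution $(F_\bullet,\partial)$ of $k$, the transpose-tensor module has a copresentation $\Hom_R(F_t,C)\to\Hom_R(F_{t+1},C)\to M\to 0$ with outer terms in $\add_R C$, and the homology of the dualized complex yields a natural short exact sequence
$$
0\to\Ext^{t+1}_R(k,C)\to M\to M'\to 0,\qquad M'\hookrightarrow\Hom_R(F_{t+2},C)\in\add_R C.
$$
Thus the Bass datum $\Ext^{t+1}_R(k,C)\cong k^{\mu^{t+1}(C)}$, being annihilated by $\m$, sits inside $\Soc M\subseteq\Soc Y$ for any module $Y\supseteq M$. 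Granting (3), fix such a $Y$ with $\cdim_R Y<\infty$; then $Y\in\mathcal{B}_C$, and once $R$ is known to be Cohen-Macaulay the Auslander-Buchsbaum-type equality $\cdim_R Y=t-\depth_R Y$ bounds $\cdim_R Y\le t$ and controls $\Soc Y$. The objective is to convert the embedding of this Bass datum, together with the finiteness of $\cdim_R Y$, into the vanishing $\mu^i(C)=0$ for all $i>t$, that is, $\id_R C=t<\infty$.

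The hard part is precisely this last passage: a single embedding visibly constrains only $\mu^{t+1}(C)$ through the socle, whereas one must kill every higher Bass number. A naive attempt to dualize the embedding by $\Hom_R(C,-)$ — which does carry finite $C$-dimension to finite projective dimension — fails to reduce cleanly to the case $C=R$, because $\Hom_R(C,M)$ differs from $\tr\Omega^t k$ by a correction term $\Ext^1_R(C,-)$ that is exactly where the non-Gorenstein behaviour of $R$ resides; indeed a clean reduction would wrongly force $R$ to be Gorenstein. I would instead route the argument through Tate cohomology, using that $\id_R C<\infty$ is equivalent to the vanishing of $\cext^i_R(k,C)$, and show that the finiteness of $\cdim_R Y$ propagates the socle-level vanishing upward to all degrees. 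Managing these correction terms and establishing the requisite rigidity of the Bass numbers of the semidualizing module $C$ is where the real work, and the main difficulty, lies.
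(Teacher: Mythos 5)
Your implication $(3)\Rightarrow(1)$ --- which you rightly call the heart of the matter --- is not actually proved: it ends by naming the difficulty rather than resolving it. You correctly exhibit $\Ext_R^{t+1}(k,C)$ as a submodule of $M:=\tr\Omega^tk\otimes_RC$ (your copresentation argument is equivalent to the paper's identification, via Auslander--Bridger, of $\Ker\lambda_{M}$ with $\Ext_R^{t+1}(k,C)$), but placing this module inside $\Soc Y$ proves nothing by itself, and you concede you cannot get vanishing from there. The paper closes this gap with two ingredients that are absent from your proposal. First, a lifting mechanism: since $\grade_R\Ext_R^i(k,R)\ge i-1$, the syzygy $\Omega^tk$ is $t$-torsionfree, so $\Ext_R^i(\tr\Omega^tk,R)=0$ for $1\le i\le t$; a spectral-sequence argument (Lemma \ref{sen}, using that $\tr\Omega^tk$ is free on the punctured spectrum) transfers this to $\Ext_R^i(M,C)=0$ for $i\le t$, and since $\cdim_RX\le t$ by \cite[Lemma 4.3]{tor}, these vanishings allow the given embedding $M\hookrightarrow X$ to be lifted along an $\add_RC$-resolution of $X$ to an embedding $M\hookrightarrow C_0$ with $C_0\in\add_RC$ (Lemma \ref{syz}); the biduality map $\lambda_M$ is then injective, i.e.\ $\Ext_R^{t+1}(k,C)=\Ker\lambda_M=0$. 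Second, the rigidity you propose to ``establish'' via Tate cohomology is already a known theorem: by Fossum--Foxby--Griffith--Reiten \cite[Theorem (1.1)]{FFGR}, the Bass numbers of a finitely generated module cannot vanish between its depth and its injective dimension, so the single vanishing $\Ext_R^{t+1}(k,C)=0$, with $\depth_RC=t$, already forces $\id_RC\le t<\infty$. Your worry about killing every higher Bass number, and the detour through Tate cohomology, are therefore unnecessary --- but as written your argument establishes neither the single vanishing nor any propagation, so the key implication remains unproven.

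A secondary flaw: your self-contained sketch of $(1)\Rightarrow(2)$ rests on the claim that $\Ker\bigl(X\to\Hom_R(\Hom_R(X,C),C)\bigr)$ has dimension strictly smaller than $\dim X$. This is false in general: if $X=R/\p$ with $\p$ a non-minimal prime of the Cohen--Macaulay ring $R$, then $\p$ contains a $C$-regular element, so $\Hom_R(X,C)=0$ and the kernel is all of $X$; hence your induction on $\dim X$ does not close. The paper avoids this issue entirely by quoting Sharp \cite[Theorem (3.11)]{S} for Cohen--Macaulayness and Foxby \cite[Theorem 1]{F} for the embedding. If you want an argument rather than a citation, the correct tool is the Auslander--Buchweitz hull of finite injective dimension, namely an exact sequence $0\to X\to Y\to Z\to 0$ with $\id_RY<\infty$ and $Z$ maximal Cohen--Macaulay, not a dimension-wise d\'evissage through $C$-biduality.
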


\section{Proof of Theorem \ref{main} and its applications}

First of all, we recall the definition of a semidualizing module.

\begin{defn}
An $R$-module $C$ is called {\em semidualizing} if the natural homomorphism $R\to\Hom_R(C,C)$ is an isormophism and $\Ext_R^i(C,C)=0$ for all $i>0$.
\end{defn}

Note that a dualizing module is nothing but a semidualizing module of finite injective dimension.
Another typical example of a semidualizing module is a free module of rank one.
Recently a considerable number of authors have studied semidualizing modules and have obtained many results concerning these modules.

We denote by $\m$ the maximal ideal of $R$ and by $k$ the residue field of $R$.
To prove our main theorem, we establish two lemmas.

\begin{lem}\label{syz}
Let $C$ be a semidualizing $R$-module.
Let $g: M\to X$ be an injective homomorphism of $R$-modules with $\cdim_RX<\infty$.
If $\Ext_R^i(M,C)=0$ for any $1\le i\le \cdim_RX$, then the natural map $\lambda_M:M\to\Hom_R(\Hom_R(M,C),C)$ is injective.
\end{lem}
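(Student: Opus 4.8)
The plan is to show directly that $\Ker\lambda_M=0$. Throughout, write $(-)^\dagger=\Hom_R(-,C)$, so that $\lambda_M\colon M\to M^{\dagger\dagger}$ and the assertion becomes $\Ker\lambda_M=0$. The starting point is the naturality of $\lambda$: the square attached to $g\colon M\to X$ commutes, i.e. $\lambda_X\circ g=g^{\dagger\dagger}\circ\lambda_M$. Hence $g(\Ker\lambda_M)\subseteq\Ker\lambda_X$, and since $g$ is injective this yields an embedding $\Ker\lambda_M\hookrightarrow\Ker\lambda_X$. So everything is reduced to seeing how small $\Ker\lambda_X$ is and how the vanishing $\Ext^i_R(M,C)=0$ keeps $M$ away from it.

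Set $n=\cdim_R X$ and $N=\Ker\lambda_M$. Next I would establish two numerical bounds on $N$, measured by $\grade_C(N)=\inf\{\depth R_\p:\p\in\Supp N\}$ (here I use that $C$ is semidualizing, so $\depth C_\p=\depth R_\p$). For the lower end: biduality commutes with localization, and for $\p\in\Ass R$ one has $\depth R_\p=0$, whence the Auslander--Buchsbaum formula for $C$-dimension forces $X_\p\in\add_{R_\p}C_\p$; then $\lambda_{X_\p}$ is bijective, so $(\Ker\lambda_X)_\p=0$ and a fortiori $N_\p=0$. Thus $\Supp N\cap\Ass R=\emptyset$ and $\grade_C(N)\ge1$. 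For the upper end, suppose $N\neq0$ and choose $\p$ minimal in $\Supp N$; then $\p\in\Ass N\subseteq\Ass X$, so $\depth X_\p=0$, and the same formula gives $\depth R_\p=\cdim_{R_\p}X_\p\le\cdim_R X=n$. Consequently $\grade_C(N)\le n$ whenever $N\neq0$.

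The heart of the argument, and the step I expect to be the main obstacle, is to use the full range $\Ext^i_R(M,C)=0$ for $1\le i\le n$ to improve the lower bound to $\grade_C(N)\ge n+1$; together with $\grade_C(N)\le n$ this forces $N=0$. I would proceed by induction on $n$ via a short exact sequence $0\to X_1\to C_0\to X\to0$ with $C_0\in\add_R C$ and $\cdim_R X_1=n-1$, and its pullback $0\to X_1\to P\xrightarrow{b}M\to0$ along $g$, in which $P\hookrightarrow C_0$, so that $\lambda_P$ and $\lambda_{X_1}$ are injective. Because $\Ext^1_R(M,C)=0$, the sequence $0\to M^\dagger\to P^\dagger\to X_1^\dagger\to0$ is exact, hence its dual $0\to X_1^{\dagger\dagger}\to P^{\dagger\dagger}\to M^{\dagger\dagger}$ is exact, and the snake lemma applied to the biduality ladder gives an injection $N\hookrightarrow\Coker\lambda_{X_1}$ (the connecting map is injective since $\Ker\lambda_P=0$). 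As $\Coker\lambda_{X_1}$ has grade at least $2$, just as for ordinary biduality, this already yields $\grade_C(N)\ge2$ and disposes of the case $n=1$. The genuine difficulty is the iteration: a single snake step reaches only $\Coker\lambda_{X_1}$, so to consume the remaining vanishings $\Ext^2_R(M,C)=\cdots=\Ext^n_R(M,C)=0$ and reach grade $n+1$ one must follow the connecting homomorphism and show that the image of $N$ lands in the grade-$(n+1)$ part of the successive cokernels. I would carry this out by a careful bookkeeping of the grades of the modules $\Ext^p_R(\Ext^q_R(-,C),C)$ governing the iterated biduality, which is exactly where the precise matching between the number of Ext-vanishings and the codimension of $\Supp N$ is forced.
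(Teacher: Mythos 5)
There is a genuine gap, and it sits exactly where you flag ``the main obstacle.'' Your reduction is fine in outline: setting $N=\Ker\lambda_M$ and $n=\cdim_RX$, the upper bound $\grade_RN\le n$ for $N\neq 0$ is correct (via $\Ass N\subseteq\Ass X$ and the Auslander--Buchsbaum formula for $C$-dimension), and the snake-lemma step using $\Ext_R^1(M,C)=0$ does produce an embedding $N\hookrightarrow\Coker\lambda_{X_1}$. But this settles only $n\le 1$: when $n=1$ one has $X_1\in\add_RC$, so $\lambda_{X_1}$ is an isomorphism, $\Coker\lambda_{X_1}=0$, and $N=0$. For $n\ge 2$ the statement you need --- that the image of $N$ lands in a grade-$(n+1)$ part of the successive cokernels --- is never proved; it is deferred to ``careful bookkeeping'' of the modules $\Ext_R^p(\Ext_R^q(-,C),C)$ which is not carried out, and it is not routine. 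Worse, the principle you invoke even at the first step, that $\Coker\lambda_{X_1}$ has grade at least $2$ ``just as for ordinary biduality,'' is false for arbitrary modules: over an Artinian non-Gorenstein local ring, $\Coker\lambda_Y$ can be nonzero (e.g.\ $Y=k$ with socle dimension $\ge 2$), hence of grade $0$. It is rescued in your $n=1$ case only because $X_1\in\add_RC$ there; for $\cdim_RX_1=n-1\ge 1$ the needed grade bound is essentially as hard as the lemma itself. So, as written, the proposal proves the lemma only for $\cdim_RX\le 1$.

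The frustrating part is that your own setup contains the missing finish, which is precisely the paper's proof. You use only the vanishing $\Ext_R^1(M,C)=0$; the remaining hypotheses $\Ext_R^i(M,C)=0$ for $2\le i\le n$ are exactly what kills $\Ext_R^1(M,X_1)$ by dimension shifting along the $C$-resolution: with $X_i$ the $i$-th syzygy in $0\to C_n\to\cdots\to C_0\overset{d_0}{\to}X\to 0$, one gets $\Ext_R^1(M,X_1)\cong\Ext_R^2(M,X_2)\cong\cdots\cong\Ext_R^n(M,X_n)=\Ext_R^n(M,C_n)=0$. Consequently your pullback sequence $0\to X_1\to P\to M\to 0$, whose class lies in $\Ext_R^1(M,X_1)$, splits, so $M$ is a direct summand of $P$ and hence embeds in $C_0$; the naturality argument you already applied to $P$ (a submodule of $C_0$ has injective biduality map, since $\lambda_{C_0}$ is an isomorphism) then applies verbatim to $M$, and you are done --- no grade bounds, no induction, no iterated snake lemma. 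Equivalently, as the paper phrases it: $\Ext_R^1(M,X_1)=0$ makes $\Hom_R(M,d_0)$ surjective, so $g$ lifts to an injective $f\colon M\to C_0$, and the commutative square $\lambda_{C_0}\circ f=\Hom_R(\Hom_R(f,C),C)\circ\lambda_M$ forces $\lambda_M$ to be injective. The lesson is that the full range of Ext-vanishings is designed for lifting the embedding through the resolution, not for controlling kernels of biduality prime by prime.
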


\begin{proof}
First of all we prove that $M$ can be embedded in a module $C_0$  in $\add_RC$.
For this we set $n=\cdim_RX$. 
If $n =0$, then this is obvious from the assumption, since  $X \in \add_R C$.    If $n > 0$, then there exists an exact sequence
$$
0 \to C_n \overset{d_n}{\to} C_{n-1} \overset{d_{n-1}}{\to} \cdots \overset{d_1}{\to} C_0 \overset{d_0}{\to} X \to 0
$$
with $C_i\in\add_RC$ for $0\le i\le n$.
Putting $X_i=\Im d_i$, we have exact sequences 
$$
0 \to X_{i+1} \to C_i \to X_i \to 0  \quad  (0 \le i \le n-1). 
$$
Then we have $\Ext_R^1(M,X_1)=0$, since there are isomorphisms $\Ext_R^1(M,X_1)\cong\Ext_R^2(M,X_2)\cong\cdots\cong\Ext_R^n(M,X_n)\cong\Ext_R^n(M,C_n)=0$.
Hence  $\Hom _R(M, d_0) : \Hom _R (M, C_0) \to \Hom _R (M, X)$  is surjective. 
This implies that the homomorphism  $g \in \Hom _R (M, X)$  is lifted to $f \in \Hom _R (M, C_0)$, i.e.  $d_0 \cdot f = g$. 
Since  $g$  is injective, $f$ is injective as well. 
Therefore $M$ has an embedding $f$  into $C_0$.  

To prove that  $\lambda _{M}$  is injective, we note that 
$\lambda _{C_0}$  is an isomorphism, because of $C_0 \in \add _RC$. 
Since there is an injective homomorphism $f:M\to C_0$, the following 
 commutative diagram forces  $\lambda _M$  to be injective:
$$
\begin{CD}
M @>{f}>> C_0 \\
@V{\lambda_M}VV @V{\lambda_{C_0}}V{\cong}V \\
\Hom_R(\Hom_R(M,C),C) @>{\Hom_R(\Hom_R(f,C),C)}>> \Hom_R(\Hom_R(C_0,C),C).
\end{CD}
$$
\end{proof}

\begin{lem}\label{sen}
Let $C$ be a semidualizing $R$-module and let $M$ be an $R$-module.
Assume that $M$ is free on the punctured spectrum of $R$.
Then there is an isomorphism
$$
\Ext_R^i(M,R)\cong\Ext_R^i(M\otimes_RC,C)
$$
for each integer $i\le\depth_RC$.
\end{lem}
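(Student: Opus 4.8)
The plan is to compare $M\otimes_R C$ with the derived tensor product $M\otimes_R^{\mathbf{L}}C$, whose homology in homological degree $j$ is $\Tor_j^R(M,C)$. Concretely, if $F_\bullet\to M$ is a free resolution and $G_\bullet=F_\bullet\otimes_R C$, then each $G_i=F_i\otimes_R C$ is a finite direct sum of copies of $C$, so $\Ext_R^{>0}(G_i,C)=0$ and the naive complex $\Hom_R(G_\bullet,C)$ computes $\RHom_R(M\otimes_R^{\mathbf{L}}C,C)$. On the other hand, tensor--hom adjunction and the semidualizing property give
$$\Hom_R(F_i\otimes_R C,C)\cong\Hom_R\bigl(F_i,\Hom_R(C,C)\bigr)\cong\Hom_R(F_i,R),$$
so $\Hom_R(G_\bullet,C)\cong\Hom_R(F_\bullet,R)$ as complexes. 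Passing to cohomology yields the ``master isomorphism'' $\Ext_R^i(M\otimes_R^{\mathbf{L}}C,C)\cong\Ext_R^i(M,R)$ for every $i$, and it remains only to replace $\otimes_R^{\mathbf{L}}$ by $\otimes_R$ in the range $i\le\depth_R C$.

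The discrepancy between the two tensor products is exactly where the punctured-spectrum hypothesis enters. For every prime $\p\neq\m$ the localization $M_\p$ is $R_\p$-free, hence $\Tor_j^R(M,C)_\p=0$ for $j\ge1$; therefore each $\Tor_j^R(M,C)$ with $j\ge1$ is supported only at $\m$ and so has finite length. Writing $T=M\otimes_R^{\mathbf{L}}C$, I would invoke the canonical truncation triangle $\tau_{\ge1}T\to T\to M\otimes_R C\to$, in which $M\otimes_R C$ sits in degree $0$ while $\tau_{\ge1}T$ has homology precisely the finite-length modules $\Tor_j^R(M,C)$ for $j\ge1$. Applying $\RHom_R(-,C)$ and using the master isomorphism to identify $\RHom_R(T,C)\cong\RHom_R(M,R)$ produces a long exact sequence tying together $\Ext_R^i(M\otimes_R C,C)$, $\Ext_R^i(M,R)$, and the cohomology of the error term $\RHom_R(\tau_{\ge1}T,C)$.

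The final step is to push that error term out of the relevant range. Put $t=\depth_R C$. Any finite-length module $N$ is filtered by copies of $k$, so $\depth_R C=\inf\{p:\Ext_R^p(k,C)\neq0\}$ forces $\Ext_R^p(N,C)=0$ for all $p<t$. A homology module $\Tor_j^R(M,C)$ sitting in homological degree $j\ge1$ thus contributes to $\RHom_R(\tau_{\ge1}T,C)$ only in cohomological degrees $\ge t+j\ge t+1$, and a d\'evissage along the truncations of $\tau_{\ge1}T$ (equivalently the hyper-Ext spectral sequence $E_2^{p,q}=\Ext_R^p(\Tor_q^R(M,C),C)$) shows $H^i\RHom_R(\tau_{\ge1}T,C)=0$ for all $i\le t$. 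Feeding this vanishing into the long exact sequence annihilates the connecting terms flanking $\Ext_R^i(M\otimes_R C,C)\to\Ext_R^i(M,R)$ whenever $i\le t$, giving the desired isomorphism. I expect the main obstacle to be exactly this degree bookkeeping: one must verify that ``free on the punctured spectrum'' confines the correction homology to positive homological degrees, and that the depth $t$ raises its $\RHom_R(-,C)$-contribution strictly above the window $i\le t$, so that the two a priori distinct Ext modules coincide precisely up to homological degree $\depth_R C$.
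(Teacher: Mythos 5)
Your proposal is correct and is essentially the paper's own argument: the paper simply invokes the hyper-Ext spectral sequence $E_2^{p,q}=\Ext_R^p(\Tor_q^R(M,C),C)\Rightarrow\Ext_R^{p+q}(M,R)$ (valid since $C$ is semidualizing), notes that the punctured-spectrum hypothesis makes $\Tor_q^R(M,C)$ finite length for $q>0$, and kills $E_2^{p,q}$ for $p<t$, $q>0$ by the depth-sensitivity of $\Ext$ against finite-length modules. Your truncation-triangle d\'evissage is just an unpacked form of that same spectral sequence (as you yourself observe), with identical key ingredients and degree bookkeeping.
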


\begin{proof}
Set $t=\depth_RC$.
Since $C$ is semidualizing, we have a spectral sequence
$$
E_2^{p,q}=\Ext_R^p(\Tor_q^R(M,C),C)\Rightarrow\Ext_R^{p+q}(M,R).
$$
Note by assumption that the $R$-module $\Tor_q^R(M,C)$ has finite length for $q>0$.
By \cite[Proposition 1.2.10(e)]{BH} we have $E_2^{p,q}=0$ if $p<t$ and $q>0$.
Hence
$$
\Ext_R^i(M\otimes_RC,C)=E_2^{i,0}\cong\Ext_R^i(M,R)
$$
for $i\le t$.
\end{proof}

Let $M$ be an $R$-module.
Take a free resolution
$$
F_\bullet=(\cdots \overset{d_{n+1}}{\to} F_n \overset{d_n}{\to} F_{n-1} \overset{d_{n-1}}{\to} \cdots \overset{d_1}{\to} F_0 \to 0)
$$
of $M$.
Then for a nonnegative integer $n$ we define the {\em $n$-th syzygy} of $M$ by the image of $d_n$ and denote it by $\Omega_R^nM$ or simply $\Omega^nM$.
We also define the {\em (Auslander) transpose} of $M$ by the cokernel of the map $\Hom_R(d_1,R):\Hom_R(F_0,R)\to\Hom_R(F_1,R)$ and denote it by $\tr_RM$ or simply $\tr M$.
Note that the $n$-th syzygy and the transpose of $M$ are uniquely determined up to free summand.
Note also that they commute with localization; namely, for every prime ideal $\p$ of $R$ there are isomorphisms $(\Omega_R^nM)_\p\cong\Omega_{R_\p}^nM_\p$ and $(\tr_RM)_\p\cong\tr_{R_\p}M_\p$ up to free summand.

Recall that for a positive integer $n$ an $R$-module is called {\em $n$-torsionfree} if
$$
\Ext_R^i(\tr M,R)=0
$$
for all $1\le i\le n$.
Now we can prove our main theorem.

\begin{mpf}
(1) $\Rightarrow$ (2): By virtue of \cite[Theorem (3.11)]{S}, the local ring $R$ is Cohen-Macaulay.
Now assertion (2) follows from \cite[Theorem 1]{F}.

(2) $\Rightarrow$ (3): This implication is obvious.

(3) $\Rightarrow$ (1): We denote by $(-)^\dag$ the $C$-dual functor $\Hom_R(-,C)$.
Put $t=\depth_RC$ and set $M=\tr\Omega^tk$.
Then we have $\depth R=t$ by \cite{G}.
Since
$$
\grade_R\Ext_R^i(k,R)\ge i-1
$$
for $1\le i\le t$, the module $\Omega^tk$ is $t$-torsionfree by \cite[Proposition (2.26)]{ABr}.
Hence $\Ext_R^i(M,R)=0$ for $1\le i\le t$.
As $M$ is free on the punctured spectrum of $R$, Lemma \ref{sen} implies $\Ext_R^i(M\otimes_RC,C)=0$ for $1\le i\le t$.
By assumption (3), the module $M\otimes_RC$ has an embedding into a module $X$ with $\cdim_RX<\infty$.
According to \cite[Lemma 4.3]{tor}, we have $\cdim_RX\le t$.
Lemma \ref{syz} shows that the natural map $\lambda_{M\otimes_RC}:M\otimes_RC\to (M\otimes_RC)^{\dag\dag}$ is injective.
On the other hand, since there are natural isomorphisms
\begin{align*}
(M\otimes_RC)^{\dag\dag} & =\Hom_R(\Hom_R(M\otimes_RC,C),C)\cong\Hom_R(\Hom_R(M,\Hom_R(C,C)),C)\\
& \cong\Hom_R(\Hom_R(M,R),C),
\end{align*}
we see from \cite[Proposition (2.6)(a)]{ABr} that
\begin{align*}
\Ker\lambda_{M\otimes_RC} & \cong\Ext_R^1(\tr M,C)\cong\Ext_R^1(\Omega^tk,C)\\
& \cong\Ext_R^{t+1}(k,C).
\end{align*}
Thus we obtain $\Ext_R^{t+1}(k,C)=0$.
By \cite[Theorem (1.1)]{FFGR}, the $R$-module $C$ must have finite injective dimension.

As we observed in the proof of the implication (1) $\Rightarrow$ (2), assertion (1) implies that $R$ is Cohen-Macaulay.
Thus the last assertion follows.
\qed
\end{mpf}

Now we give applications of our main theorem.
Letting $C=R$ in Theorem \ref{main}, we obtain the following result.
This improves Theorem \ref{foxby} and extends Theorem \ref{ausbr}.

\begin{cor}\label{ques}
The following are equivalent:
\begin{enumerate}[\rm (1)]
\item
$R$ is Gorenstein.
\item
Every $R$-module can be embedded in an $R$-module of finite projective dimension.
\end{enumerate}
\end{cor}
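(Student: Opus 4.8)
The plan is to derive this corollary by specializing Theorem \ref{main} to the case $C=R$, so the entire task reduces to checking that the three ingredients appearing in that theorem translate correctly. First I would verify that $R$ is itself a semidualizing $R$-module: the natural map $R\to\Hom_R(R,R)$ sending $1$ to the identity is an isomorphism, and $\Ext_R^i(R,R)=0$ for all $i>0$ since $R$ is free over itself. Hence Theorem \ref{main} applies with $C=R$ (and with $t=\depth R$).

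Next I would unwind the two notions that distinguish the statement. Because $R$ is local, the class $\add_R R$ consists exactly of the finitely generated free $R$-modules, so an exact sequence $0\to C_n\to\cdots\to C_0\to X\to 0$ with $C_i\in\add_R R$ is precisely a finite free resolution of $X$. Therefore $\cdim_R X=\pd_R X$, and the phrase ``finite $C$-dimension'' becomes ``finite projective dimension.'' On the other hand, a dualizing module is by definition a semidualizing module of finite injective dimension, so the assertion that $C=R$ is dualizing means exactly that $R$ has finite injective dimension as a module over itself, which is the standard characterization of the Gorenstein property for the Noetherian local ring $R$.

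With these identifications in place, condition (1) of Theorem \ref{main} reads ``$R$ is Gorenstein'' and condition (2) reads ``every $R$-module can be embedded in an $R$-module of finite projective dimension,'' so the equivalence (1) $\Leftrightarrow$ (2) of the theorem is precisely the statement of the corollary. I expect no genuine obstacle at this stage: all of the mathematical content is already carried by Theorem \ref{main}, and the corollary is obtained merely by substituting $C=R$ and rewriting its hypotheses and conclusion in the resulting special case.
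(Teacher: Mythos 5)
Your proposal is correct and takes exactly the same route as the paper, which obtains the corollary by setting $C=R$ in Theorem \ref{main}; your additional checks (that $R$ is semidualizing, that $\cdim$ becomes $\pd$ since $\add_RR$ is the class of finite free modules over the local ring $R$, and that ``$R$ is dualizing'' means $\id_RR<\infty$, i.e.\ $R$ is Gorenstein) are precisely the routine verifications the paper leaves implicit.
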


Combining Corollary \ref{ques} with \cite[Theorem 1]{F}, we have the following.

\begin{cor}
If every finitely generated $R$-module can be embedded in a finitely generated $R$-module of finite projective dimension, then every finitely generated $R$-module can be embedded in a finitely generated $R$-module of finite injective dimension.
\end{cor}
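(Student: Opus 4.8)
The plan is to read the hypothesis as one of the equivalent conditions appearing in Corollary \ref{ques}, and thereby reduce the entire statement to the Gorensteinness of $R$, after which a known result of Foxby finishes the job. Concretely, the assumption ``every finitely generated $R$-module can be embedded in a finitely generated $R$-module of finite projective dimension'' is verbatim condition (2) of Corollary \ref{ques}. Hence that corollary immediately yields that $R$ is Gorenstein; this is the sole place where the main theorem of the paper is invoked.

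Next I would record the elementary fact that a Gorenstein local ring is Cohen-Macaulay and admits a dualizing module, namely $R$ itself: a Gorenstein local ring is precisely a Cohen-Macaulay local ring for which $R$ is dualizing, equivalently $\id_R R<\infty$. In particular, the hypotheses of \cite[Theorem 1]{F} are satisfied by our ring $R$.

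Finally I would apply \cite[Theorem 1]{F}, which asserts that over a Cohen-Macaulay local ring possessing a dualizing module every finitely generated module embeds into a finitely generated module of finite injective dimension. Applied to the Gorenstein ring $R$, this gives exactly the desired conclusion. Alternatively, one may bypass the citation entirely: since $R$ is Gorenstein we have $\id_R R=\dim R<\infty$, and a finite free resolution of any module $N$ with $\pd_R N<\infty$ then forces $\id_R N<\infty$. Consequently the very modules of finite projective dimension furnished by the hypothesis already have finite injective dimension, so the required embeddings are already at hand.

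I do not expect a genuine obstacle here, since the argument is essentially a combination of two established facts; the only point demanding care is to confirm that the hypotheses of \cite[Theorem 1]{F} (Cohen-Macaulayness together with the existence of a dualizing module) really are implied by Gorensteinness, and—should one prefer the self-contained route—that finite projective dimension forces finite injective dimension over a Gorenstein ring via $\id_R R<\infty$.
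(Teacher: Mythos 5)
Your proposal is correct and follows essentially the same route as the paper, which proves this corollary precisely by combining Corollary \ref{ques} with \cite[Theorem 1]{F}. Your alternative self-contained argument---that over a Gorenstein ring $\id_RR<\infty$ forces every module of finite projective dimension to have finite injective dimension, so the embeddings from the hypothesis already suffice---is also valid and even bypasses the citation of Foxby's theorem.
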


\section*{Acknowledgments}
The authors thank Sean Sather-Wagstaff and the referees for their kind comments and valuable suggestions.


\end{document}